\numberwithin{equation}{section}
\newtheorem{ass}{Assumption}
\newtheorem{asss}{Assumption}
\newtheorem{thm}{Theorem}[section]
\newtheorem{thmm}{Theorem}
\newtheorem{lem}[thm]{Lemma}
\theoremstyle{definition}
\newtheorem{rem}[thm]{Remark}
\DeclareMathAlphabet{\mathsc}{OT1}{cmr}{m}{sc}
\newcommand\cB{{\mathscr B}}
\newcommand\cC{{\mathscr C}}
\newcommand\cL{{\mathcal L}}
\newcommand\cM{{\mathscr M}}
\newcommand\cP{{ P}}
\newcommand\cN{{\mathcal N}}
\newcommand\Cone{{C_{1}}}
\newcommand\Ctwo{{C_{2}}}
\newcommand\Cdolgo{C_{3}}
\newcommand\Cten{C_{4}}
\newcommand\Cjim{{C_{5}}}
\newcommand\Cthree{{C_{6}}}
\newcommand\Cfour{{C_{7}}}
\newcommand\Cfive{{C_{8}}}
\newcommand\Cmid{{C_{9}}}
\newcommand\Couter{{C_{10}}}
\newcommand\Cthirteen{C_{11}}
\newcommand\Celeven{s}
\newcommand\Ctwelve{r}
\newcommand\Cjune{{C_{\ell}}}
\newcommand\Cjohn{{\gamma_{0}}}
\newcommand\bC{{\mathbb C}}
\newcommand\bN{{\mathbb N}}
\newcommand\bR{{\mathbb R}}
\newcommand\As{{\mathcal A}}
\newcommand\Bs{{\mathcal B}}
\newcommand\Ds{{\mathcal D}}
\newcommand{\T}[1]{T_{#1}} 
\newcommand{\Id}{\mathbf{id}}   
\newcommand{\norm}[1]{\left\lVert{#1}\right\rVert}
\newcommand{\abs}[1]{\left\lvert{#1}\right\rvert}
\newcommand{\R}[1]{R({#1})}
\newcommand{\flo}[1]{\Phi^{{#1}}} 
\begin{document}
\author{ Oliver Butterley}
\title[Operator Semigroups Associated to Chaotic Flows]{A Note on Operator Semigroups Associated to Chaotic Flows}
\date{10\textsuperscript{th} March 2016}
\email{oliver.butterley@univie.ac.at}
\address{Oliver Butterley\\
 Fakult\"at f\"ur Mathematik\\
Universit\"at Wien\\
Oskar-Morgenstern-Platz 1\\ 1090 Wien, Austria.}
\thanks{ It is a pleasure to thank Viviane Baladi and Carlangelo Liverani for helpful discussions. Research supported by the Austrian Science Fund, Lise Meitner position M1583. 
 Additional 
 support by the Stiftung Aktion \"Osterreich Ungarn (A\"OU), Projekt Nr. 87\"ou6.}
\keywords{One-parameter semigroup, Chaotic flow, Exponential mixing, Rapid mixing, Transfer operator}
\subjclass[2010]{37A25, 37D20,  47D06}

\begin{abstract}
The transfer operator associated to a flow (continuous time dynamical system) is a one-parameter operator semigroup.
We consider the operator-valued Laplace transform of this one-parameter semigroup. Estimates on the Laplace transform have been used in various settings in order to show the rate at which the flow mixes. 
Here we consider the case of exponential mixing and the case of rapid mixing (super polynomial). We develop the  operator theory framework amenable to this setting and show that the same estimates may be used to produce results, in terms of the operators, which go beyond the results for the rate of mixing. 
\end{abstract}

\maketitle

\thispagestyle{empty}

\section{Introduction}\label{sec:intro}
Flows are important dynamical systems, arguable the origin of much of the research in the area of dynamical systems. It has proved significantly more difficult to study strong statistical properties of flows compared to corresponding questions for discrete time systems. Of particular importance is proving the rate of mixing of a given system or family of systems. Substantial initial progress was made by studying the Laplace transform of the correlation function~\cite{Po,D}. A certain estimate (the oscillatory cancelation estimate pioneered by Dolgopyat~\cite{D}) can then be translated into an exponential mixing estimate for the flow.
 These ideas were  developed by Liverani to the closely related question of studying the resolvent operator of the infinitesimal generator of the semigroup of transfer operators~\cite{Li1}. An identical argument is used by Baladi and Liverani~\cite{Baladi:2011uq}, and by Giuletti, Pollicott and Liverani~\cite{Giulietti:fk}.
 We further develop these ideas, extending the idea of considering the operator-valued Laplace transform of the Transfer operator~\cite{oli1202} and show that one may squeeze a little more information from this line of thinking.
The case for exponentially mixing flows and rapid mixing flows are presented side-by-side in the same language and so are easily comparable. 

The improved operator-theoretic result is of interest in several ways. Firstly that beyond the rate of mixing there are many other statistical properties with can often be deduced from the spectral results~\cite[\S9]{Keller:1989} and cannot be deduced directly from the rate of mixing.
Another important use of the functional analysis is for studying how statistical properties behave under perturbations of the dynamical system~\cite{KL}. Such perturbations could be deterministic or random. Moreover the same ideas (as interpreted in~\cite{keller2005sgo}) can  be used for the physically important question of understanding coupled dynamical systems.

The improvements are as follows. 
The calculation involved is completely streamlined. This makes it clear that the constant obtained in the decay rate (in terms of degree of differentiability of the observable) cannot be improved without additional ideas.
We avoid the need for Liverani's ``silly preliminary fact''~\cite[Lemma~2.14]{Li1}.
Additionally we are able to obtain a spectral decomposition \eqref{eq:result} of the transfer operator in a sense similar, although weaker, than the results of Tsujii~\cite{Ts,tsujii2008qct,Tsujii:2011uq}. This means we obtain a precise description of the mixing and moreover it is to be expected that further information concerning other statistical properties can be obtained from this operator-theoretic  representation. The result is in a form especially amenable to the ideas of~\cite{KL} regarding the use of operator perturbation theory in order to understand various questions in dynamical systems.

Note that in this article we do not prove the required estimates for any particular dynamical systems with respect to any particular Banach space. Rather we  isolate the abstract argument and make some improvements to this. 
It is an important question and a subject of ongoing research to investigate the rate of mixing (and other fine statistical properties) for a broad spectrum of flows. 
The method we are discussing (i.e., functional analysis applied to dynamical systems) requires as a first step the choice or design of a Banach space on which the one-parameter family of transfer operators acts ``nicely''. 
Moreover, at this point in time, to answer such questions for flows, no one knows a method which does not involve  functional-analytic ideas to some extent. 
Designing appropriate Banach spaces and proving such estimates for systems of interest (including many physically relevant systems with discontinuities and singularities) remains an important subject of ongoing research (see, for example, \cite{Demers:2011fk, oli1203}).
In many cases the appropriate choice of Banach space is far from obvious. 
In this note we are able to reduce the assumptions that such a  Banach space must satisfy in order to be useful and consequently simplify the search for and construction of the dynamically relevant Banach spaces. In particular we avoid the requirement that the one-parameter semigroup is strongly continuous.

In view of potential numerical applications throughout the argument we will keep track of all the relevant constants. In Section~\ref{sec:results} we present the results in two theorems, one concerning the exponentially mixing case and the other concerning the rapid mixing case. In Section~\ref{sec:apps} we give details of systems where the required assumptions have already been shown to be satisfied. We hope these assumptions will soon be shown to be satisfied in many more settings. 
Section~\ref{sec:proofs1} and Section~\ref{sec:proofs2} are devoted to the proofs of the results.

\section{Results}\label{sec:results}
Let $(\Bs, \norm{\cdot}_{\Bs})$ and   $(\As, \norm{\cdot}_{\As})$    be Banach spaces such that $\As\supset \Bs$ and   $\norm{\cdot}_{\As} \leq  \norm{\cdot}_{\Bs}$.\footnote{In actual fact one needs only the Banach space $(\Bs, \norm{\cdot}_{\Bs})$ equipped with an auxiliary, weaker norm $ \norm{\cdot}_{\As}$. However in this case one can always define $\As$ to be the completion of $\Bs$ with respect to $ \norm{\cdot}_{\As}$ and so without loss of generality with give the assumptions as above. } 
Let $\cB(\Bs,\Bs)$ denote the Banach space of bounded linear operators $T:\Bs \to \Bs$ equipped with the   standard operator norm which we denote $\norm{T}_{\Bs}$.
We consider a measurable operator-valued function $T:[0,\infty) \to \cB(\Bs,\Bs)$ denoted by $t \mapsto \T{t}$  such that 
\[
\T{0} = \Id, \quad \quad
\T{s}\circ \T{t} = \T{s+t} \quad \text{for all $t,s \geq 0$},
\]
and that
 $\norm{\T{t}}_{\Bs } \leq \Cone$ for some  $\Cone>0$.
In other words $\T{t}:\Bs \to \Bs$ is a bounded one-parameter  semigroup.\footnote{The boundedness requirement  is essentially superfluous since if a one-parameter semigroup  satisfies a bound of the form $\norm{\T{t}}_{\Bs } \leq C e^{\gamma t}$ then we may simply consider the operator $\tilde{\T{t}} := e^{-\gamma t}\T{t}$ and proceed as before.  }  
 We  define a weaker operator norm
\begin{equation}\label{eq:defopnorm}
\norm{T}_{\Bs \to \As} := \sup\{\norm{T \mu}_{\As}: \mu\in \Bs, \norm{\mu}_{\Bs}\leq 1\}.
\end{equation}
It would be unrealistic in the intended applications to hope that the semigroup is norm continuous, often the semigroup is not even strongly continuous.\footnote{The one-parameter semigroup $\T{t}$ is said to be \emph{strongly continuous} if $T : [0,\infty) \times \Bs \to \Bs$ is jointly continuous. 
Given the semigroup structure it is only required to check the continuity at~$0$:
It is known~\cite[Theorem~6.2.1]{Davies:2007qy} that  $\T{t}$ is strongly continuous iff $\lim_{t\to 0}\T{t} \mu = \mu$ for all $\mu\in \Bs$.  There are examples~\cite[Example~6.1.10]{Davies:2007qy} such that $T : (0,\infty) \times \Bs \to \Bs$ is jointly continuous but $T : [0,\infty) \times \Bs \to \Bs$ is not jointly continuous.}  We merely require the following, substantially weaker, continuity condition. 
\begin{ass}[Weak-Lipschitz]\label{ass:Lip}
There exists $\Ctwo>0$ such that
\[
\frac{1}{t}\norm{\T{t}-\Id}_{\Bs \to \As} \leq \Ctwo\quad \text{for all $t\geq 0$}.
\]
\end{ass}
\noindent
See Section~\ref{sec:apps} for discussion of this assumption and how it is natural in the intended applications.
For all $z\in \bC$, $\Re(z)>0$ let $\R{z}\in \cB(\Bs,\Bs)$ be defined by the  Bochner integral
\begin{equation}\label{eq:defRz}
\R{z} := \int_{0}^{\infty}e^{-zt}\T{t} \ dt.
\end{equation}
Since the semigroup is bounded we know that $\norm{\R{z}}_{\Bs} \leq \Cone \Re(z)^{-1}$ for all $\Re(z)>0$ but we need a bit more information concerning $\R{z}$.
\begin{ass}\label{ass:LY}
There exists $\lambda>0$ such that 
the essential spectral radius of $\R{z}:\Bs \to\Bs$ is not greater than $(\Re(z)+\lambda)^{-1}$  for all $\Re(z)>0$. 
\end{ass}
\noindent
In all cases we will assume the both Assumption~\ref{ass:Lip} and Assumption~\ref{ass:LY} hold. In addition we will assume that one of the two following assumptions holds. 
The first is an oscillatory cancellation type estimate of the form used by Dolgopyat in the study of Anosov flows~\cite{D}.
\begin{asss}[Exponential]\label{ass:dolgo}
There exists $\beta, \alpha, \Cdolgo>0$ and $\gamma \in(0, 1/\ln(1+\lambda/\alpha))$ such that,
 for all $\Re(z)= \alpha$, $\abs{\Im(z)}\geq \beta$,
\[
\norm{\R{z}^{\tilde n}}_{\Bs} \leq  \Cdolgo (\Re{(z)}+\lambda)^{-\tilde n}, \quad \quad \text{where $\tilde n =\lceil \gamma\ln \abs{\Im(z)}\rceil$}.
\]
\end{asss} 
\noindent
An alternative and far weaker assumption is the following estimate of the form used by Dolgopyat in the study of the prevalence of rapid mixing among Axiom~A flows~\cite{dolgopyat1998prm}.
\begin{asss}[Rapid]\label{ass:rapid}
There exists  $\beta,\Cten, \Celeven, \Ctwelve >0$ such that
$\R{z}$ admits a holomorphic extension to the set $\{z \in \bC: \abs{\Im(z)} \geq \beta,   \Re(z) \geq - \abs{\Im(z)}^{-\Ctwelve}\}$ and on this set 
\[
\norm{\R{z}}_{\Bs} \leq \Cten \abs{\Im(z)}^{\Celeven}.
\]
\end{asss}
\noindent
That Assumption~\ref{ass:dolgo} is stronger than Assumption~\ref{ass:rapid} can be seen from the calculations in Section~\ref{sec:proofs1}.

For the moment suppose that  $\T{t} : \Bs \to\Bs$ is a strongly continuous  one-parameter semigroup. The \emph{generator} of the  semigroup is the linear operator 
defined by
\[ 
Z\mu := \lim_{t\to 0} \frac{1}{t}  \left(\T{t} \mu -\mu   \right)
\]
the domain of $Z$, which we denote $\operatorname{Dom}(Z)$, being the set of $\mu\in \Bs$ for which the limit exists. 
There is no reason to expect $Z$ to be a bounded operator. 
It is known~\cite[Lemma~6.1.15]{Davies:2007qy} that $ \operatorname{Dom}(Z)$ is complete with respect to the norm
\[
\norm{\mu}_{Z} := \norm{Z\mu}_{\Bs}+ \norm{\mu}_{\Bs},
\]
and consequently $Z$ is a closed operator~\cite[Problem~6.1.1]{Davies:2007qy}. 
According to~\cite[Lemma~6.1.11]{Davies:2007qy}  we know that $\operatorname{Dom}(Z)$ is $\norm{\cdot}_{\Bs}$-dense in $\Bs$.

In the case when $\T{t} : \Bs \to\Bs$ is not strongly continuous it will be convenient to have a subset of $\Bs$ on which $\T{t}$ is known to be well behaved. Let, just as in~\cite{oli1202},
\[
\Ds_{0}:= \left\{{ \textstyle \int_{0}^{s} } \T{t}\mu \ dt : \mu \in \Bs, s>0  \right\}.
\]
Let $\Ds \subseteq \Bs$ denote the completion, with respect to $\norm{\cdot}_{\Bs}$, of $\Ds_{0}$.
Assumption~\ref{ass:Lip} implies that $\Ds$ is $\norm{\cdot}_{\As}$-dense in $\Bs$. 
(If it were known that $\T{t} : \Bs \to\Bs$ is strongly continuous then $\Ds$ is $\norm{\cdot}_{\Bs}$-dense in $\Bs$.)
It is easy to see that $\T{t} \Ds \subseteq \Ds$, and a simple estimate~\cite[Lemma~2.8]{oli1202} implies that
$\norm{ \T{t}\mu - \mu}_{\Bs} \to 0$ as $t\to 0$ for all $\mu \in \Ds$.
Consequently $\T{t} : \Ds \to \Ds$ is a strongly-continuous one-parameter semigroup.\footnote{Of course $\Ds$ should be understood to mean the Banach space $(\Ds, \norm{\cdot}_{\Bs})$.}
In this case we define the generator $Z$ with $\operatorname{Dom}(Z)$ as above but for the semigroup $\T{t} : \Ds \to \Ds$.
In this case  we know that $\Ds_{0} \subseteq \operatorname{Dom}(Z)$ and consequently $\operatorname{Dom}(Z)$ is $\norm{\cdot}_{\As}$-dense in $\Bs$.
From this point forward when we refer to  $\operatorname{Dom}(Z)$ this  should be understood to imply the Banach space $(\operatorname{Dom}(Z), \norm{\cdot}_{Z})$,
defined as above, depending on whether the semigroup is strongly continuous or not.

 The first main result of this paper is the following theorem.
 \begin{thmm}\label{thm:main}
 Suppose that $\T{t}:\Bs \to \Bs$ is a bounded one-parameter semigroup satisfying Assumptions~\ref{ass:Lip}, \ref{ass:LY}, and \ref{ass:dolgo}. 
 Then there exists a finite set 
 \[
 \{z_{j}\}_{j=1}^{N}\subset \{z\in \bC: -\lambda<  \Re({z})\leq 0, \abs{\Im(z)}\leq \beta\},
 \]
   a set of finite rank projectors ${\{\Pi_{j}\}}_{j=1}^{N}$,
  a set of nilpotents  ${\{\cN_{j}\}}_{j=1}^{N}$ 
  and an operator-valued function $t\mapsto \cP_{t} \in \cB(\Bs, \Bs)$ where 
  $\Pi_{j}\cP_{t} = \cP_{t}\Pi_{j} = 0$,
  $\Pi_{j}\Pi_{k} = \delta_{jk} \Pi_{j}$,
  $\Pi_{j}\cN_{j} = \cN_{j}\Pi_{j} = \cN_{j}$
  such that 
 \begin{equation}\label{eq:result}
 \T{t} = \cP_{t} + \sum_{j=1}^{N} e^{tz_{j}}e^{t \cN_{j}} \Pi_{j}  \quad \text{for all $t\geq 0$}.
 \end{equation}
 Moreover for all $\ell < \lambda$ there exists $\Cjune>0$ such that,  for all $\mu\in \operatorname{Dom}(Z)$, $t\geq 0$, 
 \begin{equation}\label{eq:estimate}
 \norm{\cP_{t}\mu}_{ \As} \leq \Cjune e^{-\ell t}\norm{Z\mu}_{\Bs}.
 \end{equation}
 \end{thmm}
\noindent
 The proof of the theorem is the content of Section~\ref{sec:proofs1}.

 \begin{rem}
The theorem is only useful if the set $\operatorname{Dom}(Z)$ is sufficiently large. 
As discussed immediately prior to the theorem, $\operatorname{Dom}(Z)$ is  $\norm{\cdot}_{\As}$-dense in $\Bs$.
However if $\T{t}$ were a  strongly-continuous one-parameter semigroup, then  $\operatorname{Dom}(Z)$ is $\norm{\cdot}_{\Bs}$-dense in $\Bs$.
 \end{rem}
 \begin{rem}
 \label{rem:perturb}
 Since parts of the above result are limited to $\operatorname{Dom}(Z)$ one might suppose that it would have been convenient to work with the reduced operator semigroup $\T{t} : \Ds \to \Ds$ instead of $\T{t} : \Bs \to \Bs$ from the very beginning. 
 Sometimes this is convenient but, as illustrated in~\cite{oli1202}, this can easily cause problems when studying flows and their perturbations, particular systems with discontinuities. 
 The crucial problem being that the Banach space $\Ds$ depends on the flow, as does $\operatorname{Dom}(Z)$. 
 Note that in the part of the above result which concerns the peripheral spectrum does not depend on $\Ds$ or $\operatorname{Dom}(Z)$ allowing for the possibility of studying perturbation from an operator theory point of view. 
 \end{rem}

\begin{rem}\label{rem:zd}
For the purpose of this remark denote by $Z_{\Ds}$ the generator associated to $\T{t}: \Ds \to \Ds$ (this is the operator denoted by $Z$ throughtout the rest of the document). 
Similarly denote by  $Z_{\Bs}$ the generator associated to $\T{t}: \Bs \to \Bs$.
It holds that
\[
 \Ds_{0} \subseteq \operatorname{Dom}(Z_{\Ds})
  \subseteq \operatorname{Dom}(Z_{\Bs})
  \subseteq \Ds.
\]
The first inclusion was discussed in the above paragraph and the second is obvious. Here we will prove the final inclusion.
Let $\mu \in \operatorname{Dom}(Z_{\Bs})$.
For all $s>0$ let $\nu_{s} := s^{-1} \int_{0}^{s} \T{t}\mu \ dt \in \Ds_{0}$. 
Note that $\nu_{s} - \mu = s^{-1} \int_{0}^{s}( \T{t}\mu - \mu ) \ dt$.
Since $\lim_{t\to 0} \frac{1}{t}(\T{t}\mu - \mu) = Z_{\Bs}\mu \in \Bs$ then 
$\norm{ \nu_{s} - \mu }_{\Bs} \to 0$ as $s\to 0$ and so $\mu \in \Ds$.
\end{rem}

 \begin{rem}
With the current ideas we cannot hope for a strengthening of the theorem whereby $\norm{\cP_{t}}_{ \Bs} \leq Ce^{-\ell t}$.  This is a subtlety of one-parameter semigroups as demonstrated by Zabczyk's example\footnote{There exists a one-parameter group $\T{t}$ acting on a Hilbert space such that the spectrum of $Z$ is contained in $i\bR$ but $e^{\abs{t}}$ is in the spectrum of $\T{t}$ for all $t\in \bR$. This means that the inclusion proved in~\cite[Theorem 8.2.7]{Davies:2007qy} cannot be improved to an equality.}~\cite[Theorem 8.2.9]{Davies:2007qy}.
This is a problem that was overcome in the work of Tsujii~\cite{Ts,tsujii2008qct,Tsujii:2011uq} but results are limited to systems which are rather regular and it is not clear if such a strategy is possible in general.
 \end{rem}
\begin{rem}
If the one-parameter semigroup was actually a one-parameter semigroup of 
 operators associated to an ergodic flow, as in the intended applications, then  
one  can typically show that mixing is equivalent to $\{z_{j}\}_{j=1}^{N} \cap \{\Re(z) = 0\} = \{0\}$ (see, for example~\cite{BL, BL2}). 
\end{rem}
\begin{rem}
Most often Assumption~\ref{ass:LY} is proven by the combination of a compact embedding $\Bs \hookrightarrow \As$ 
and an estimate of the form $\norm{\R{z}^{n}\mu}_{\Bs} \leq C (\Re(z) + \lambda)^{-n} \norm{\mu}_{\Bs} + C (1+ \Im(z))\norm{\mu}_{\As}$. 
Such information is sufficient to deduce the estimate of the essential spectral radius by following Hennion's argument~\cite{He} based on the formula by Nussbaum~\cite{Nussbaum} (see for example~\cite{Li1}). 
In this case Assumption~\ref{ass:dolgo} can be weakened: It is then sufficient to prove the estimate of Assumption~\ref{ass:dolgo} 
in the weaker norm $\norm{\cdot}_{\As}$ rather than in the original norm $\norm{\cdot}_{\Bs}$ and only for $\mu\in \Bs$ for which $\norm{\mu}_{\Bs}$ is sufficiently small in comparison to  $\norm{\mu}_{\As}$.
\end{rem}

In order to state the result which corresponds to rapid mixing we must have higher order control on the regularity in the flow direction. For any $q\in \bN$ define the norm
\[
 \norm{\mu}_{Z^{q}} := \sum_{0 \leq n \leq q} \norm{Z^{n} \mu}_{\Bs},
\]
for all $\mu \in \operatorname{Dom}(Z^{q})$.
As before $Z$ is understood to be the generator of the strongly-continuous one-parameter semigroup $\T{t}: \Ds \to \Ds$.
The second main result of this paper is the following theorem.
 \begin{thmm}\label{thm:main2}
 Suppose that $\T{t}:\Bs \to \Bs$ is a bounded one-parameter semigroup satisfying Assumptions~\ref{ass:Lip}, \ref{ass:LY}, and \ref{ass:rapid}. 
 Then there exists a finite set 
 \[
 \{z_{j}\}_{j=1}^{N}\subset \{z\in \bC: -\lambda<  \Re({z})\leq 0, \abs{\Im(z)}\leq \beta\},
 \]
  a set of finite rank projectors $\{\Pi_{{j}}\}_{j=1}^{N} \subset \cB(\Bs,\Bs)$ and an operator-valued function $t\mapsto \cP_{t} \in \cB(\Bs, \Bs)$ such that 
 \begin{equation}\label{eq:result2}
  \T{t} = \cP_{t} + \sum_{j=1}^{N} e^{tz_{j}}e^{t \cN_{j}} \Pi_{j}  \quad \text{for all $t\geq 0$}.
 \end{equation}
 Moreover for all $p\in \bN$ there exists $q\in \bN$, $C_{p}>0$ such that,  for all $\mu\in \operatorname{Dom}(Z^{q})$, $t\geq 0$, 
 \begin{equation}\label{eq:estimate2}
 \norm{\cP_{t}\mu}_{ \As} \leq C_{p} t^{-p} \norm{\mu}_{Z^{q}}. 
 \end{equation}
 \end{thmm}
\noindent
 The proof of the theorem is the content of Section~\ref{sec:proofs2}.
\begin{rem}
Note that the required regularity $q = q(p)$ depends on the desired decay rate $p$ and must be taken larger when $p$ increases. The exact connection of the two can be seen in the calculation at the end of Section~\ref{sec:proofs2}.
When considering rates of mixing of a flow the above requirement of $\mu\in \operatorname{Dom}(Z^{q})$ becomes the unfortunate requirement of the observables being ``rather smooth'' in the flow direction.  
\end{rem}

\begin{rem}
Dolgopyat's original formulation~\cite{dolgopyat1998prm} of rapid mixing considered $\cC^{\infty}$ functions as observables.  
See \cite[Definition~2.2]{melbourne2007rdc} for a formulation closer to the above statement.
Note that being \emph{sufficiently regular} in the flow direction is crucial for this result.
 However it is of some help that the notion of regularity is entirely dependent on the choice of $\norm{\cdot}_{\Bs}$ and $\norm{\cdot}_{\As}$.
\end{rem}

\begin{rem}
Usually Assumption~\ref{ass:rapid} is proven by showing the non-existence of \emph{approximate eigenvalues}~\cite{dolgopyat1998prm,dolgopyat2000prm,melbourne2007rdc,FMT,MR2472164}.
\end{rem}

\section{Applications}\label{sec:apps}
Assumption~\ref{ass:LY} and Assumption~\ref{ass:dolgo} have been shown for contact Anosov flows ~\cite{Li1}   (in the reference the two spaces $\Bs$ and $\As$ are denoted $\mathcal{B}(\mathcal{M},\bC)$ and $\mathcal{B}_{w}(\mathcal{M},\bC)$ respectively).  In order to show that Assumption~\ref{ass:Lip}  holds it is convenient to modify the stronger of the two norms by adding a term which controls (in supremum) the derivative in the flow direction.  As a result assumption~\ref{ass:Lip} is simple to prove in this setting once one notices that
$\int_{0}^{t} V \eta \circ \flo{s} \ ds = \eta \circ \flo{t} - \eta$ for all $t\geq 0$ where $V$ is the vector field associated to the flow $\flo{t}:\cM\to \cM$. Let $\cL_{t}$ be the associated transfer operator.
This means that 
\[
\int_{\cM} (\cL_{t}h - h) \cdot \eta  \ dm
= \int_{\cM} h \cdot ( \eta\circ\flo{t} - \eta) \ dm
= \int_{0}^{t} \int_{\cM} \cL_{s}h \cdot V\eta \ dm \ ds.
\]
This immediately implies the weak Lipschitz control required by Assumption~\ref{ass:Lip}.
Similarly these assumptions  have been shown to be satisfied in several other settings ~\cite{Baladi:2011uq,Giulietti:fk}.

The observant reader will have noticed that the modification of the norm as described above has the unfortunate side effect that the Banach space is then dependent on the dynamics and therefore unsuitable to studying perturbations as outlined in Remark~\ref{rem:perturb}.
The improved norms~\cite{BL,BL2} for Anosov flows are immediately suitable in terms of satisfying Assumption~\ref{ass:Lip} and Assumption~\ref{ass:LY}. Unfortunately, at present, it is not known if Assumption~\ref{ass:dolgo} is satisfied with respect to these norms and for Anosov flows without contact structure all indications suggest that some new idea is required.

Assumptions~\ref{ass:Lip}, \ref{ass:LY}, and Assumption~\ref{ass:rapid} have been shown for a prevalent set of Axiom~A flows in~\cite{dolgopyat1998prm}. However in the reference everything is described in the \emph{twisted transfer operator} language for suspension flows. To pass from that viewpoint to the present  language note that the calculation~(see for example \cite{Po} or \cite[Lemma~7.17]{avila2005emt}) used to relate the Laplace transform of the correlation to a sum of twisted transfer operators may equally well be used for the Laplace transform of the transfer operator of the flow for the suspension flow.

\section{The Exponentially Mixing Case}\label{sec:proofs1}
Throughout we suppose that Assumptions~\ref{ass:Lip}, \ref{ass:LY}, and \ref{ass:dolgo} are satisfied. 
First we recall a fact which appeared in~\cite{oli1202}.
Note that the proof is done using the integral definition of $\R{z}$ (and not by associating it to a resolvent of some operator) using that Fubini also holds for Bochner integrals~\cite[Theorem~1.1.9]{arendt2011vector}.
\begin{lem}[{\cite[Lemma~2.2]{oli1202}}]
\label{lem:resolventequation}
For all $\Re(z)>0$, $\Re(\zeta)>0$ then, on $\cB(\Bs,\Bs)$, holds
\[
(z-\zeta) \R{\zeta} \R{z} = \R{\zeta} - \R{z}.
\]
\end{lem}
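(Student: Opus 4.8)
The plan is to compute the product $\R{\zeta}\R{z}$ directly from the defining Bochner integral \eqref{eq:defRz} and then to exploit the semigroup law $\T{s}\circ\T{t}=\T{s+t}$. First I would apply both operators to an arbitrary $\mu\in\Bs$ and, using that each $\T{s}$ is bounded linear and therefore commutes with the Bochner integral, write
\[
\R{\zeta}\R{z}\mu=\int_{0}^{\infty}\int_{0}^{\infty}e^{-\zeta s}e^{-zt}\,\T{s}\T{t}\mu\ dt\ ds.
\]
The double integrand is dominated in norm by $\Cone\norm{\mu}_{\Bs}\,e^{-\Re(\zeta)s-\Re(z)t}$, which is integrable over $[0,\infty)^{2}$ because $\norm{\T{t}}_{\Bs}\leq\Cone$; hence the vector-valued Fubini theorem applies and we may insert $\T{s}\T{t}\mu=\T{s+t}\mu$ and change variables. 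Substituting $u=s+t$ (so that for fixed $u$ the variable $s$ runs over $[0,u]$) gives
\[
\R{\zeta}\R{z}\mu=\int_{0}^{\infty}\left(\int_{0}^{u}e^{-\zeta s}e^{-z(u-s)}\ ds\right)\T{u}\mu\ du.
\]

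Next I would evaluate the inner scalar integral. Assuming first $z\neq\zeta$, one has $\int_{0}^{u}e^{-\zeta s}e^{-z(u-s)}\,ds=e^{-zu}\int_{0}^{u}e^{(z-\zeta)s}\,ds=(z-\zeta)^{-1}\bigl(e^{-\zeta u}-e^{-zu}\bigr)$, so that
\[
\R{\zeta}\R{z}\mu=\frac{1}{z-\zeta}\int_{0}^{\infty}\bigl(e^{-\zeta u}-e^{-zu}\bigr)\T{u}\mu\ du=\frac{1}{z-\zeta}\bigl(\R{\zeta}\mu-\R{z}\mu\bigr),
\]
recognising the two pieces via \eqref{eq:defRz}. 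Multiplying through by $z-\zeta$ and noting that $\mu\in\Bs$ was arbitrary yields the stated operator identity on $\cB(\Bs,\Bs)$; the excluded case $z=\zeta$ is trivial since then both sides are zero.

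The argument is essentially all routine manipulation, and the only step that requires a word of justification is the interchange of the two integrals together with the commutation of $\T{s}$ with integration. Both are immediate consequences of the uniform bound $\norm{\T{t}}_{\Bs}\leq\Cone$, which makes every integral in sight absolutely convergent. So I expect no real obstacle; this is a soft preliminary whose only content is the bookkeeping of the substitution $u=s+t$.
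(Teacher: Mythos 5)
Your proof is correct: the paper itself does not prove this lemma but simply cites \cite[Lemma~2.2]{oli1202}, and your computation (commuting the bounded operator $\T{s}$ past the Bochner integral, applying vector-valued Fubini justified by the uniform bound $\norm{\T{t}}_{\Bs}\leq\Cone$, substituting $u=s+t$, and evaluating the inner scalar integral) is exactly the standard argument that reference supplies. The only implicit ingredient is the strong measurability of $t\mapsto\T{t}\mu$, but that is already assumed by the paper when it declares \eqref{eq:defRz} to be a well-defined Bochner integral, so there is no gap relative to the paper's own standing hypotheses.
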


We already know that the operator valued function $z\mapsto \R{z} \in \cB(\Bs, \Bs)$   is holomorphic on the set $\{z\in \bC: \Re(z)>0\}$  \cite[Theorem~1.5.1]{arendt2011vector}.
We now take advantage of Assumption~\ref{ass:dolgo} for the following result.
\begin{lem}  
\label{lem:extension}
The operator valued function $z\mapsto \R{z} \in \cB(\Bs, \Bs)$  admits an extension which is  meromorphic   on the set $\{z\in \bC: \Re(z)>-\lambda\}$ and holomorphic on the set $\{z\in \bC: \Re(z)>-\lambda, \abs{\Im(z)}\geq \beta\}  $.
\end{lem}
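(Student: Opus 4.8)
The plan is to continue $\R{z}$ by bootstrapping off the resolvent identity of Lemma~\ref{lem:resolventequation}: the quasi-compactness hidden in Assumption~\ref{ass:LY} gives a meromorphic continuation across the imaginary axis, and the oscillatory estimate of Assumption~\ref{ass:dolgo} then excludes poles with large imaginary part. First I would record a finite resolvent expansion: iterating Lemma~\ref{lem:resolventequation} gives, for all $\Re(z),\Re(\zeta)>0$ and all $n\geq 1$,
\[
\R{z}=\sum_{k=0}^{n-1}(\zeta-z)^{k}\R{\zeta}^{\,k+1}+(\zeta-z)^{n}\R{\zeta}^{\,n}\R{z},
\]
equivalently $\bigl(\Id-(\zeta-z)^{n}\R{\zeta}^{\,n}\bigr)\R{z}=Q_{n}(z)$ where $Q_{n}(z):=\sum_{k=0}^{n-1}(\zeta-z)^{k}\R{\zeta}^{\,k+1}$ is a polynomial in $z$ with coefficients in $\cB(\Bs,\Bs)$, hence an entire $\cB(\Bs,\Bs)$-valued function. (The case $n=1$ is a rearrangement of Lemma~\ref{lem:resolventequation}; the general case follows by induction, feeding the $n=1$ identity into the remainder term.) Since $z\mapsto\R{z}$ is already holomorphic on $\{\Re(z)>0\}$, only the continuation across the imaginary axis is at stake.

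For the meromorphic continuation I would take $n=1$ and work locally. Fix $\zeta$ with $\Re(\zeta)>0$. By Assumption~\ref{ass:LY} the essential spectral radius of $\R{\zeta}$ is at most $(\Re(\zeta)+\lambda)^{-1}$, so on the disc $D_{\zeta}:=\{z:\abs{z-\zeta}<\Re(\zeta)+\lambda\}$ the scalar $(\zeta-z)^{-1}$ has modulus exceeding that radius and hence lies outside the essential spectrum of $\R{\zeta}$; thus $(\zeta-z)^{-1}\Id-\R{\zeta}$ is a holomorphic family of index-zero Fredholm operators on $D_{\zeta}$, invertible for $z$ near $\zeta$ (there $(\zeta-z)^{-1}$ is large and a Neumann series applies). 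By the analytic Fredholm theorem for such families (as in the quasi-compact perturbation theory of~\cite{Li1}) $z\mapsto(\Id-(\zeta-z)\R{\zeta})^{-1}$ is meromorphic on $D_{\zeta}$, so $\R{z}=(\Id-(\zeta-z)\R{\zeta})^{-1}\R{\zeta}$ extends meromorphically to $D_{\zeta}$ with finite-rank residues, and its poles in $D_{\zeta}$ are exactly the $z_{0}$ for which $(\zeta-z_{0})^{-1}$ is an eigenvalue of $\R{\zeta}$. As $\zeta$ runs over $\{\Re(\zeta)>0\}$ the discs $D_{\zeta}$ cover $\{\Re(z)>-\lambda\}$, and the local continuations agree on overlaps by uniqueness of analytic continuation, giving a meromorphic extension to $\{\Re(z)>-\lambda\}$.

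To exclude poles with $\abs{\Im(z)}\geq\beta$ I would bring in Assumption~\ref{ass:dolgo}. Suppose $z_{0}$ is a pole with $\Re(z_{0})>-\lambda$ (necessarily $\Re(z_{0})\leq0$) and $\abs{\Im(z_{0})}\geq\beta$. Choose $\zeta=\alpha+i\,\Im(z_{0})$, so $z_{0}\in D_{\zeta}$, and set $\tilde n=\lceil\gamma\ln\abs{\Im(z_{0})}\rceil$; then $(\alpha-\Re(z_{0}))^{-1}$ is a real eigenvalue of $\R{\zeta}$ of modulus $>(\alpha+\lambda)^{-1}$, hence $(\alpha-\Re(z_{0}))^{-\tilde n}$ is an eigenvalue of $\R{\zeta}^{\,\tilde n}$, so its modulus is bounded by the spectral radius and hence by the norm, and Assumption~\ref{ass:dolgo} forces
\[
(\alpha-\Re(z_{0}))^{-\tilde n}\leq\norm{\R{\zeta}^{\,\tilde n}}_{\Bs}\leq\Cdolgo\,(\alpha+\lambda)^{-\tilde n}.
\]
Since $\Re(z_{0})>-\lambda$ the ratio $(\alpha+\lambda)/(\alpha-\Re(z_{0}))$ exceeds $1$, while $\tilde n\to\infty$ as $\abs{\Im(z_{0})}\to\infty$; invoking in addition $\Re(z_{0})\leq0$ and the standing restriction $\gamma<1/\ln(1+\lambda/\alpha)$, this inequality cannot persist for large $\abs{\Im(z_{0})}$, so after possibly enlarging $\beta$ (harmless, since Assumption~\ref{ass:dolgo} for one threshold implies it for any larger one) the poles are confined to $\{-\lambda<\Re(z)\leq0,\ \abs{\Im(z)}\leq\beta\}$, i.e.\ $\R{z}$ is holomorphic on $\{\Re(z)>-\lambda,\ \abs{\Im(z)}\geq\beta\}$. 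Equivalently, for such $z$ one checks $\norm{(\zeta-z)^{\tilde n}\R{\zeta}^{\,\tilde n}}_{\Bs}<1$, so $\R{z}=(\Id-(\zeta-z)^{\tilde n}\R{\zeta}^{\,\tilde n})^{-1}Q_{\tilde n}(z)$ is holomorphic at $z$ by a Neumann series.

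I expect the meromorphic-continuation step to be the delicate one: $\R{\zeta}$ is not compact, so the classical analytic Fredholm theorem does not apply directly — one must either strip off the finitely many eigenvalues of $\R{\zeta}$ of modulus above the essential spectral radius by a spectral-projection argument (reducing the remainder to a Neumann series) or appeal to the meromorphic-inverse theorem for holomorphic index-zero Fredholm families, and then verify the local continuations glue consistently. A secondary technical point is reconciling the single line $\Re(\zeta)=\alpha$ on which Assumption~\ref{ass:dolgo} lives with the strips near $\Re(z)=-\lambda$ that the last step must reach — in particular, ruling out poles accumulating towards $\Re(z)=-\lambda$ is where the bookkeeping has to be done with care.
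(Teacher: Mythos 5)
Your proposal is correct and follows essentially the same route as the paper: the identity $\R{z}=(\Id-(\zeta-z)\R{\zeta})^{-1}\R{\zeta}$ obtained from Lemma~\ref{lem:resolventequation}, Assumption~\ref{ass:LY} to make the inverse meromorphic on the disc $\abs{z-\zeta}<\Re(\zeta)+\lambda$ (quasi-compactness in place of compactness), a covering of $\{\Re(z)>-\lambda\}$ by such discs, and Assumption~\ref{ass:dolgo} to control the spectral radius of $\R{\alpha+ib}$ and thereby exclude poles with $\abs{\Im(z)}\geq\beta$. The only differences are cosmetic: the iterated expansion with general $n$ is not needed at this stage (the paper uses only the $n=1$ identity, deferring the $\tilde n$-fold powers to the quantitative bound of Lemma~\ref{lem:extension2}), and the delicate point you flag about poles approaching $\Re(z)=-\lambda$ with large imaginary part is treated no more carefully in the paper, which simply asserts the spectral-radius bound $(\Re(z)+\lambda)^{-1}$ from Assumption~\ref{ass:dolgo}.
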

\begin{proof}
Consider $z\in \bC$, $\Re(z)>0$ and $\eta\in \bC$, $\abs{\eta} > \Re(z)^{-1}$.
By Lemma~\ref{lem:resolventequation}
$ \eta^{-1} \R{z+\eta^{-1}} \R{z} = \R{z+\eta^{-1}} - \R{z}$
  since in particular  $\eta \neq 0$ and $\Re(z-\frac{1}{\eta})>0$. 
  Consequently
\begin{equation}\label{eq:pres}
\R{z+\tfrac{1}{\eta}} = \eta \R{z} (\eta \Id - \R{z})^{-1}.
\end{equation}
We know that $(\eta \Id - \R{z})$ is invertible since the spectral radius of $\R{z}$ is not greater than $\Re(z)^{-1}$.
Consequently \eqref{eq:pres} defines the extension of $\R{z}$ into the left half of the imaginary plane.
 By  Assumption~\ref{ass:LY} the operator valued function $\eta \mapsto (\eta \Id - \R{z})^{-1}$ is meromorphic on the set $\{ \abs{\eta} > (\Re(z) +\lambda)^{-1} \}$.
 By Assumption~\ref{ass:dolgo} we know that the spectral radius of $\R{z}$ is not greater than $(\Re(z) +\lambda)^{-1}$ when $\Re(z)>-\lambda$ and $ \abs{\Im(z)}\geq \beta$. This means that in this case the operator valued function $\eta \mapsto (\eta \Id - \R{z})^{-1}$ is holomorphic on this set.
\end{proof}

\begin{proof}[Proof of the first part of Theorem~\ref{thm:main}]
An immediate consequence of  Lemma~\ref{lem:extension} is that the function $z\mapsto \R{z} \in \cB(\Bs, \Bs)$  has no more than a finite number of poles on the set $\{z\in \bC: \Re(z)>-\lambda\}$. We let $\{z_{j}\}_{j=0}^{N} \subset \bC$ denote this finite set of poles. 
For each $z_{j}$ let
\[
\Pi_{j}:= \frac{1}{2\pi i} \int_{\Gamma_{j}} \R{z} \ dz
\]
where $\Gamma_{j}$ is a positively-orientated small circle enclosing $z_{j}$ but excluding all other singularities of $\R{z}$. As is well known for spectral projectors, the resolvent equation, which was proven in Lemma~\ref{lem:resolventequation}, implies that the definition is  independent on the choice of $\Gamma_{j}$ subject to the above conditions.
We now, for all $t\geq0$, define $\cP_{t}: \Bs \to \Bs$ by
 \[
 \cP_{t} := \T{t} - \sum_{j=1}^{N} e^{tz_{j}} \Pi_{{j}}.
 \]
To complete  the proof of the theorem  it remains to give the appropriate estimates on $\cP_{t}$. This is the substantial part of the present argument and will be postponed until the end of the section.
\end{proof}

The following key step is  an application of the inverse of the Laplace-Stieltjes transform of an operator valued function~\cite[Theorem~2.3.4]{arendt2011vector} to the present situation.
\begin{lem}
\label{lem:inverse}
Suppose $t\geq 0$, $a >0$. Then, in $\cB(\Bs, \As)$, we have that
\[
  \T{t}   = \lim_{k\to \infty} \frac{1}{2\pi i} \int_{-k}^{k} e^{(a+ib)t}\R{a+ib} \ db.
\]
\end{lem}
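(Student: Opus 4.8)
The statement to be proved is Lemma~\ref{lem:inverse}, which expresses $\T{t}$ as the inverse Laplace--Stieltjes transform of the resolvent. Here is how I would approach it.

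\textbf{Overall strategy.} The plan is to recover $\T{t}$ from $\R{z}$ via the classical complex-inversion formula for Laplace transforms, but carried out at the level of the \emph{weaker} operator norm $\norm{\cdot}_{\Bs\to\As}$, which is exactly where Assumption~\ref{ass:Lip} becomes available. The key structural input is that, although $\T{t}$ need not be strongly continuous on $\Bs$, the map $t\mapsto \T{t}\mu$ \emph{is} Lipschitz in $\As$ uniformly in $\mu$ on the unit ball of $\Bs$, by Assumption~\ref{ass:Lip} together with the semigroup property (since $\norm{\T{t+s}-\T{t}}_{\Bs\to\As}=\norm{\T{t}(\T{s}-\Id)}_{\Bs\to\As}\le \Cone\norm{(\T{s}-\Id)}_{\Bs\to\As}\le \Cone\Ctwo s$). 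Thus $t\mapsto \T{t}$ is a nice (Lipschitz, hence of bounded variation on compacta) $\cB(\Bs,\As)$-valued function, and the operator-valued Laplace--Stieltjes inversion theorem applies to it. The cited ``Theorem~1'' of~\cite{oli1202} is precisely such an abstract inversion statement, so the task is to check its hypotheses.

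\textbf{Key steps, in order.} First I would record that $\R{z}$, viewed in $\cB(\Bs,\As)$, is the Laplace transform of the function $F(t):=\T{t}$ (a $\cB(\Bs,\As)$-valued function): this is immediate from the definition~\eqref{eq:defRz} as a Bochner integral, using $\norm{\cdot}_{\As}\le\norm{\cdot}_{\Bs}$ and boundedness of the semigroup to justify convergence and the exchange of integral and the (continuous, norm-decreasing) inclusion $\Bs\hookrightarrow\As$. Second, I would verify the Lipschitz/bounded-variation property of $t\mapsto \T{t}$ in $\cB(\Bs,\As)$ via the computation above; this also gives the growth bound $\norm{\T{t}}_{\Bs\to\As}\le \Cone$ needed for the transform to exist on $\Re(z)>0$ and to be the ``right'' one. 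Third, I would invoke the abstract operator-valued Laplace--Stieltjes inversion theorem (Theorem~1 of~\cite{oli1202}, which is the classical Widder/Phragmén-type inversion adapted to the Bochner setting) to conclude that for any $a>0$ and any point $t$ of continuity of $F$ (here every $t\ge 0$, by Lipschitz continuity),
\[
  \T{t} = \lim_{k\to\infty}\frac{1}{2\pi i}\int_{-k}^{k} e^{(a+ib)t}\,\R{a+ib}\,db
\]
with the limit taken in $\cB(\Bs,\As)$. Finally, I would remark that independence of $a>0$ follows from Cauchy's theorem together with the decay of $\R{z}$ on vertical lines, but strictly this is already built into the inversion theorem's conclusion, so it need not be re-derived.

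\textbf{Main obstacle.} The delicate point is the mode of convergence: the symmetric truncated integral $\int_{-k}^{k}$ does \emph{not} converge absolutely (the integrand decays only like $\abs{b}^{-1}$ along the vertical line, from the trivial resolvent bound $\norm{\R{a+ib}}_{\Bs}\le \Cone a^{-1}$ and not faster), so one genuinely needs the conditional, principal-value form of the inversion theorem rather than a naive Fubini/dominated-convergence argument. This is exactly what forces the passage to the weaker norm $\norm{\cdot}_{\Bs\to\As}$: the Lipschitz-in-$\As$ regularity supplies the one extra power of $\abs{b}^{-1}$ (morally, an integration by parts $\R{z}=\tfrac1z\Id+\tfrac1z\int_0^\infty e^{-zt}\,d\T{t}$ using $\int_0^\infty e^{-zt}\T{t}\,dt$ and Assumption~\ref{ass:Lip}) that makes the Dirichlet-type kernel $\int_{-k}^{k} e^{ibt}/(a+ib)\,db$ converge to the indicator of $\{t>0\}$. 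Since the cited Theorem~1 of~\cite{oli1202} packages this argument, the proof reduces to checking its hypotheses — boundedness of the semigroup and the weak-Lipschitz Assumption~\ref{ass:Lip} — both of which are in force; the only care needed is to state the conclusion in the correct space $\cB(\Bs,\As)$ and for the specific $t\ge0$, $a>0$ claimed.
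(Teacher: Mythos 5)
The paper gives no proof here beyond citing \cite[Theorem~1]{oli1202}, and your proposal follows essentially the same route: establish weak-Lipschitz (hence locally bounded-variation) regularity of $t\mapsto\T{t}$ in $\cB(\Bs,\As)$ from Assumption~\ref{ass:Lip} and the boundedness of the semigroup, and then apply the operator-valued Laplace--Stieltjes inversion theorem, with the principal-value form of the limit handling the merely conditional convergence. The one slip is in your Lipschitz computation: you should factor $\T{t+s}-\T{t}=(\T{s}-\Id)\circ\T{t}$ so that $\T{t}$ acts first and is estimated in $\norm{\cdot}_{\Bs}$, since bounding $\norm{\T{t}(\T{s}-\Id)}_{\Bs\to\As}$ by $\Cone\norm{\T{s}-\Id}_{\Bs\to\As}$ in the order you wrote would require $\T{t}$ to be bounded on $\As$, which is not assumed --- but as the two factors commute the bound $\Cone\Ctwo s$ stands and the argument is unaffected.
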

\noindent
Details for passing from the formulation in the reference~\cite{arendt2011vector} and the present setting can be found in~{\cite[Theorem~1]{oli1202}}
(using crucially Assumption~\ref{ass:Lip}).

The whole idea of the present argument is to obtain better information on $\R{z}$ and then use the  formula given by the above lemma whilst shifting the contour.

\begin{lem}\label{lem:extension2}
Suppose that $\ell \in (0,\lambda)$. For all $b\in \bR$, $\abs{b}\geq \beta$, on $\cB(\Bs,\Bs)$
\[
\R{-\ell+ib} = \R{\alpha +ib} \left( \sum_{n=0}^{\infty} (\alpha+\ell)^{n} \R{\alpha+ib}^{n}   \right).
\] 
Moreover  there exists $\Cjim>0$ such that for all $\abs{b}\geq \beta$
\[
\norm{  \vphantom{\sum}\smash{\sum_{n=0}^{\infty}} (\alpha+\ell)^{n} \R{\alpha+ib}^{n} }_{\Bs} \leq 
\Cjim \abs{b}^{\Cjohn}
\]
where $\Cjohn: = \gamma \ln (1+ \ell \alpha^{-1}) \in(0,1)$.
\end{lem}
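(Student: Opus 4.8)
The plan is to read off the asserted identity from the resolvent equation and then control the resulting Neumann series by grouping its terms into blocks of length $\tilde n$ so that Assumption~\ref{ass:dolgo} can be brought to bear. Fix $b\in\bR$ with $\abs b\ge\beta$. The resolvent equation of Lemma~\ref{lem:resolventequation}, a priori valid only for $\Re(z),\Re(\zeta)>0$, extends by analytic continuation (in $\zeta$, for $z$ fixed) to the region on which $\R{\cdot}$ is holomorphic by Lemma~\ref{lem:extension}; equivalently it is already encoded in the extension formula \eqref{eq:pres}. Taking $z=\alpha+ib$ and $\zeta=-\ell+ib$, so that $z-\zeta=\alpha+\ell$, and rearranging yields
\[
\R{-\ell+ib}\bigl(\Id-(\alpha+\ell)\R{\alpha+ib}\bigr)=\R{\alpha+ib}.
\]
Exactly as in the proof of Lemma~\ref{lem:extension}, Assumption~\ref{ass:dolgo} forces the spectral radius of $\R{\alpha+ib}$ to be at most $(\alpha+\lambda)^{-1}$ (since $\norm{\R{\alpha+ib}^{m\tilde n}}_\Bs^{1/(m\tilde n)}\to(\alpha+\lambda)^{-1}$ as $m\to\infty$), hence the spectral radius of $(\alpha+\ell)\R{\alpha+ib}$ is at most $(\alpha+\ell)/(\alpha+\lambda)<1$ because $\ell<\lambda$. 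Therefore $\Id-(\alpha+\ell)\R{\alpha+ib}$ is invertible, its inverse is the norm-convergent Neumann series $\sum_{n\ge0}(\alpha+\ell)^n\R{\alpha+ib}^n$, and the identity above becomes the claimed formula for $\R{-\ell+ib}$.

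For the quantitative bound, still with $\abs b\ge\beta$, put $\tilde n=\lceil\gamma\ln\abs b\rceil$ and write each $n\ge0$ uniquely as $n=q\tilde n+r$ with $q\ge0$ and $0\le r<\tilde n$. Since powers of $\R{\alpha+ib}$ commute,
\[
\norm{\R{\alpha+ib}^n}_\Bs\le\norm{\R{\alpha+ib}^{\tilde n}}_\Bs^{\,q}\,\norm{\R{\alpha+ib}^{\,r}}_\Bs\le\bigl(\Cdolgo(\alpha+\lambda)^{-\tilde n}\bigr)^{q}\,\Cone\,\alpha^{-r},
\]
where the first factor is Assumption~\ref{ass:dolgo} and the last comes from the elementary identity $\R z^{\,r}=\tfrac1{(r-1)!}\int_0^\infty t^{r-1}e^{-zt}\T t\,dt$ together with $\norm{\T t}_\Bs\le\Cone$; the essential point is that $\Cone$ does not appear raised to the power $r$. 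Summing over $n$ gives
\[
\norm{\sum_{n\ge0}(\alpha+\ell)^n\R{\alpha+ib}^n}_\Bs\le\Cone\Bigl(\sum_{q\ge0}\bigl[\Cdolgo(\tfrac{\alpha+\ell}{\alpha+\lambda})^{\tilde n}\bigr]^{q}\Bigr)\Bigl(\sum_{r=0}^{\tilde n-1}(1+\tfrac\ell\alpha)^r\Bigr),
\]
and the $r$-sum is at most $\tfrac\alpha\ell(1+\tfrac\ell\alpha)^{\tilde n}\le\tfrac{\alpha+\ell}{\ell}\,\abs b^{\gamma\ln(1+\ell/\alpha)}=\tfrac{\alpha+\ell}{\ell}\,\abs b^{\Cjohn}$, using $\tilde n\le\gamma\ln\abs b+1$.

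It remains to control the $q$-sum and the small values of $\abs b$. Since $(\alpha+\ell)/(\alpha+\lambda)<1$ and $\tilde n\to\infty$ as $\abs b\to\infty$, there is $\beta'\ge\beta$ with $\Cdolgo(\tfrac{\alpha+\ell}{\alpha+\lambda})^{\tilde n}\le\tfrac12$ whenever $\abs b\ge\beta'$, so the $q$-sum is at most $2$ there and the series is bounded by $\tfrac{2(\alpha+\ell)}{\ell}\Cone\,\abs b^{\Cjohn}$. On the compact set $\beta\le\abs b\le\beta'$ one argues softly: $b\mapsto\bigl(\Id-(\alpha+\ell)\R{\alpha+ib}\bigr)^{-1}$ is continuous (it is defined throughout $\abs b\ge\beta$ by the first part), hence bounded, and $\abs b^{\Cjohn}\ge\beta^{\Cjohn}$ there, so a constant works on that range as well; taking $\Cjim$ to be the larger of the two gives the estimate. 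Finally $\Cjohn=\gamma\ln(1+\ell/\alpha)$ lies in $(0,1)$ because $0<\ell<\lambda$ and $\gamma<1/\ln(1+\lambda/\alpha)$.

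The main obstacle is the bookkeeping in the second step: one must estimate the $r<\tilde n$ ``remainder'' powers with the sharp bound $\norm{\R z^{\,r}}_\Bs\le\Cone\alpha^{-r}$ and not with the crude $(\Cone/\alpha)^r$, since it is precisely this that makes the exponent come out as $\Cjohn=\gamma\ln(1+\ell/\alpha)$ rather than $\gamma\ln(\Cone(1+\ell/\alpha))$. A lesser annoyance is that the geometric ratio governing the $q$-sum is uniformly $<1$ only once $\abs b$ is large, so the bounded range of $b$ has to be disposed of separately by the above compactness argument.
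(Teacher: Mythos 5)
Your proof is correct and follows essentially the same route as the paper: the identity comes from the resolvent equation exactly as in Lemma~\ref{lem:extension}, and the Neumann series is controlled by splitting it into blocks of length $\tilde n$ so that Assumption~\ref{ass:dolgo} handles the outer geometric sum while the sharp bound $\norm{\R{z}^{r}}_{\Bs}\leq \Cone\alpha^{-r}$ handles the inner one, producing the exponent $\Cjohn=\gamma\ln(1+\ell\alpha^{-1})$. You are in fact slightly more careful than the paper in two places --- you justify $\norm{\R{z}^{r}}_{\Bs}\leq\Cone\alpha^{-r}$ via the integral representation rather than asserting it, and you treat separately the range $\beta\leq\abs{b}\leq\beta'$ where the block ratio $\Cdolgo\bigl(\tfrac{\alpha+\ell}{\alpha+\lambda}\bigr)^{\tilde n}$ need not be below $1$ --- both points the paper's proof glosses over.
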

\begin{proof}
Since the extension of $\R{z}$ was defined in Lemma~\ref{lem:extension} by the resolvent equation we have
\[
\begin{split}
\R{-\ell+ib} &=  \R{\alpha +ib} \left[ \Id - (\alpha+\ell) \R{\alpha +ib}\right]^{-1}\\
&=    \R{\alpha +ib} \left( \sum_{n=0}^{\infty} (\alpha+\ell)^{n} \R{\alpha +ib}^{n}\right).
\end{split}
\]
It is convenient to split the sum as
\[
\sum_{n=0}^{\infty} (\alpha+\ell)^{n} \R{\alpha +ib}^{n}
= \sum_{k=0}^{\infty}  (\alpha+\ell)^{k\tilde n(b)} \R{\alpha +ib}^{k \tilde n} \sum_{m=0}^{\tilde n(b)-1}  (\alpha+\ell)^{m} \R{\alpha +ib}^{m},
\]
where $\tilde n(b) =\lceil \gamma\ln \abs{b}\rceil$.
We use the estimate $\norm{\R{\alpha +ib}}_{\Bs} \leq \Cone \alpha^{-1}$ and the estimate $\norm{\R{\alpha +ib}^{\tilde n(b)}}_{\Bs} \leq \Cdolgo({\alpha }+\lambda)^{-\tilde n(b)}$ of Assumption~\ref{ass:dolgo}.
The norm of the first sum decreases  as $\abs{b}$ increases and so we have
\[
 \sum_{k=0}^{\infty}  (\alpha+\ell)^{k\tilde n(b)} \norm{\smash{\R{\alpha +ib}^{k \tilde n(b)}}}_{\Bs}
 \leq \Cthree,
\]
where $ \Cthree:= \Cdolgo [{1- (  \frac{\alpha+\ell}{\alpha+\lambda} )^{ \gamma\ln {\beta}}}]^{-1}$.
The norm of  the second sum is  increasing as $\abs{b}$ increases. We have
\[\begin{split}
\sum_{m=0}^{\tilde n(b)-1}  {(\alpha+\ell)}^{m} \norm{\R{\alpha+ib}^{m}}_{\Bs}
&\leq
\Cone  \sum_{m=0}^{\tilde n(b)-1} \left(\tfrac{\alpha+\ell}{ \alpha} \right)^{m} \\
&\leq \Cone \alpha \ell^{-1} \abs{b}^{\Cjohn},
\end{split}
\]
 recalling that  $\Cjohn = \gamma \ln (1+ \ell \alpha^{-1})$.
We let $\Cjim:= \Cone\Cthree \alpha \ell^{-1}$.
The above  two estimates complete the proof of the lemma.
\end{proof}

\begin{lem}\label{lem:cancel}
There exists $\Cfour>0$ such that for all $\abs{b}\geq \beta$
\[
\norm{\R{\alpha + ib}}_{\Bs \to \As} \leq \Cfour \abs{b}^{-1}.
\]
\end{lem}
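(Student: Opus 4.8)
The plan is to exploit the weak-Lipschitz bound of Assumption~\ref{ass:Lip} to convert the oscillation of the factor $e^{-ibt}$ into a gain of size $\abs{b}^{-1}$, by comparing $\R{\alpha+ib}$ with a suitably time-translated copy of itself. Fix $b\in\bR$ with $\abs{b}\geq\beta$ and put $h:=\pi/\abs{b}$; the choice of the constant $\pi$ will be essential at the very end.

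First I would record the identity
\[
\bigl(1-e^{-(\alpha+ib)h}\bigr)\,\R{\alpha+ib}\mu
= \int_{0}^{h}e^{-(\alpha+ib)t}\,\T{t}\mu\ dt
+ \int_{h}^{\infty}e^{-(\alpha+ib)t}\,\bigl(\T{h}-\Id\bigr)\T{t-h}\mu\ dt ,
\]
valid for every $\mu\in\Bs$. It is obtained by noting that the change of variable $t\mapsto t-h$ in the Bochner integral \eqref{eq:defRz} gives $e^{-(\alpha+ib)h}\R{\alpha+ib}\mu=\int_{h}^{\infty}e^{-(\alpha+ib)t}\T{t-h}\mu\ dt$, subtracting this from $\R{\alpha+ib}\mu=\int_{0}^{\infty}e^{-(\alpha+ib)t}\T{t}\mu\ dt$, splitting the remaining integral at $t=h$, and using the semigroup relation $\T{t}=\T{h}\circ\T{t-h}$ for $t\geq h$.

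Next I would estimate the right-hand side in the $\As$-norm. For the first integral, $\norm{\T{t}\mu}_{\As}\leq\norm{\T{t}\mu}_{\Bs}\leq\Cone\norm{\mu}_{\Bs}$ gives a contribution bounded by $\Cone h\norm{\mu}_{\Bs}$. For the second integral, Assumption~\ref{ass:Lip} yields $\norm{(\T{h}-\Id)\T{t-h}\mu}_{\As}\leq\Ctwo h\,\norm{\T{t-h}\mu}_{\Bs}\leq\Cone\Ctwo h\,\norm{\mu}_{\Bs}$, and integrating the remaining $e^{-\alpha t}$ over $[h,\infty)$ produces a contribution bounded by $\Cone\Ctwo\alpha^{-1}h\,\norm{\mu}_{\Bs}$. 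Altogether the right-hand side has $\As$-norm at most $(\Cone+\Cone\Ctwo\alpha^{-1})\,h\,\norm{\mu}_{\Bs}$. Finally, the choice $h=\pi/\abs{b}$ makes $e^{-ibh}=-1$ (whether $b$ is positive or negative), hence $1-e^{-(\alpha+ib)h}=1+e^{-\alpha h}\geq 1$; dividing through gives $\norm{\R{\alpha+ib}\mu}_{\As}\leq\pi(\Cone+\Cone\Ctwo\alpha^{-1})\abs{b}^{-1}\norm{\mu}_{\Bs}$, so the claim holds with $\Cfour:=\pi(\Cone+\Cone\Ctwo\alpha^{-1})$.

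I do not expect a genuine obstacle here. The one delicate point is the calibration $h=\pi/\abs{b}$, chosen exactly so that the translated copy reinforces rather than cancels the original term in the prefactor $1-e^{-(\alpha+ib)h}$; a careless choice would make that factor small and destroy the estimate. It is also worth stressing that $\T{h}-\Id$ is controlled only in the weaker norm via Assumption~\ref{ass:Lip}, which is precisely why the conclusion is an estimate on $\norm{\R{\alpha+ib}}_{\Bs\to\As}$ and cannot be promoted to a bound on $\norm{\R{\alpha+ib}}_{\Bs}$.
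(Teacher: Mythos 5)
Your proof is correct. The identity $(1-e^{-(\alpha+ib)h})\R{\alpha+ib}\mu=\int_{0}^{h}e^{-(\alpha+ib)t}\T{t}\mu\,dt+\int_{h}^{\infty}e^{-(\alpha+ib)t}(\T{h}-\Id)\T{t-h}\mu\,dt$ is valid (each piece is a convergent Bochner integral), both estimates are applications of $\norm{\T{t}}_{\Bs}\leq\Cone$ and Assumption~\ref{ass:Lip} exactly as you state, and with $h=\pi/\abs{b}$ the prefactor equals $1+e^{-\alpha h}\geq 1$, so the conclusion follows with $\Cfour=\pi\Cone(1+\Ctwo\alpha^{-1})$.

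The route is genuinely different in mechanism from the paper's, though the two ingredients exploited --- the oscillation of $e^{-ibt}$ on the scale $\abs{b}^{-1}$ and the weak-Lipschitz control of $\T{t}-\T{s}$ in $\norm{\cdot}_{\Bs\to\As}$ --- are the same. The paper partitions $[0,\infty)$ into full periods $[t_{n},t_{n+1}]$ with $t_{n}=2\pi n\abs{b}^{-1}$, uses $\int_{t_{n}}^{t_{n+1}}e^{-ibt}\,dt=0$ to subtract the frozen operator $e^{-\alpha t_{n}}\T{t_{n}}$ on each period, and then sums a geometric series (producing the constant $\Cfive$). You instead derive a single global functional equation by translating the whole integral by a half-period, so that the translate \emph{reinforces} the original in the prefactor $1+e^{-\alpha h}$; this avoids the infinite sum entirely and yields a cleaner constant. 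Both arguments localise the use of Assumption~\ref{ass:Lip} to a time window of length $O(\abs{b}^{-1})$, which is why each gives exactly one power of $\abs{b}^{-1}$ and no more, and your closing observation --- that $\T{h}-\Id$ is only controlled in the weak norm, so the gain cannot be promoted to $\norm{\cdot}_{\Bs}$ --- is precisely the point the paper is making by phrasing the lemma in the $\Bs\to\As$ norm.
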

\begin{proof}
This lemma is a consequence of Assumption~\ref{ass:Lip}.
Fix $b\in \bR$.
For all $n\in \bN$ let $t_{n}:= 2\pi n \abs{b}^{-1}$ and hence
\[
\begin{split}
\R{\alpha + ib}
&= \sum_{n=0}^{\infty} \int_{t_{n}}^{t_{n+1}} e^{-(\alpha+ib)t} \T{t} \ dt\\
&= \sum_{n=0}^{\infty}e^{-\alpha t_{n}}  \int_{t_{n}}^{t_{n+1}} e^{-ibt}\left( e^{-\alpha(t-t_{n})} \T{t}  - \T{t_{n}} \right) \ dt,
\end{split}
\]
since $\int_{t_{n}}^{t_{n+1}} e^{-ibt}  \ dt = 0$. 
We have that
$\abs{e^{-\alpha(t-t_{n})} -1} 
 \leq \alpha(t-t_{n}) \leq  2\pi \alpha \abs{b}^{-1}$.
Using Assumption~\ref{ass:Lip} we have that 
 $\norm{ \T{t}  - \T{t_{n}}  }_{\Bs \to \As} \leq (t- t_{n}) \Ctwo \Cone \leq 2\pi \Ctwo \Cone \abs{b}^{-1}$ for all $t\in(t_{n},t_{n+1})$.
 This means that
 \[
 \norm{ e^{-\alpha(t-t_{n})} \T{t}  - \T{t_{n}}  }_{\Bs \to \As}
 \leq 2\pi \Cone  \left( \alpha + \Ctwo  \right) \abs{b}^{-1}.
 \]
On the other hand
\[
\begin{split}
 \sum_{n=0}^{\infty}e^{-\alpha t_{n}}  \int_{t_{n}}^{t_{n+1}}  dt 
 &=\sum_{n=0}^{\infty} \frac{2\pi}{\abs{b}}e^{-\alpha 2 \pi n \abs{b}^{-1}} \\
 &= \frac{ 2\pi \abs{b}^{-1}  } { 1- e^{-\alpha2\pi \abs{b}^{-1}}  }
 \leq  \frac{ 2\pi {\beta}^{-1}  } { 1- e^{-\alpha2\pi {\beta}^{-1}}  }=:\Cfive.
\end{split}
\]
We have shown that $\norm{\R{\alpha + ib}}_{\Bs \to \As} \leq \Cfour  \abs{b}^{-1}$ where $\Cfour:= 2\pi \Cone  \Cfive  \left( \alpha + \Ctwo  \right)$.
\end{proof}

\begin{lem}\label{lem:Z}
For all  $z \in \bC$ in the holomorphic domain of $\R{z}$ and $z\neq 0$, on $\cB(\operatorname{Dom}(Z), \Bs)$
\[
\R{z} - \frac{1}{z}\Id = \frac{1}{z} \R{z}Z.
\]
\end{lem}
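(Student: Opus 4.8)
The plan is to derive the identity $\R{z} - \frac{1}{z}\Id = \frac{1}{z}\R{z}Z$ on $\cB(\operatorname{Dom}(Z),\Bs)$ by manipulating the Bochner integral \eqref{eq:defRz} defining $\R{z}$ for $\Re(z)>0$, and then extend to the whole holomorphic domain by analytic continuation. First I would fix $\mu\in \operatorname{Dom}(Z)$ and $\Re(z)>0$. The key observation is that for such $\mu$ the map $t\mapsto \T{t}\mu$ is differentiable with $\frac{d}{dt}\T{t}\mu = \T{t}Z\mu = Z\T{t}\mu$; this is a standard fact for one-parameter semigroups on the domain of the generator (see \cite[Theorem~6.1.16 or thereabouts]{Davies:2007qy}), and it does not require strong continuity on all of $\Bs$. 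Then I would compute $\R{z}Z\mu = \int_0^\infty e^{-zt}\T{t}Z\mu\ dt = \int_0^\infty e^{-zt}\frac{d}{dt}(\T{t}\mu)\ dt$ and integrate by parts.

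The integration by parts gives $\int_0^\infty e^{-zt}\frac{d}{dt}(\T{t}\mu)\ dt = \bigl[e^{-zt}\T{t}\mu\bigr]_0^\infty + z\int_0^\infty e^{-zt}\T{t}\mu\ dt = -\mu + z\R{z}\mu$, where the boundary term at infinity vanishes because $\norm{\T{t}\mu}_{\Bs}\leq \Cone\norm{\mu}_{\Bs}$ is bounded while $e^{-zt}\to 0$, and the term at $0$ is $\T{0}\mu = \mu$. Rearranging yields $\R{z}Z\mu = z\R{z}\mu - \mu$, i.e. $\R{z}\mu - \frac{1}{z}\mu = \frac{1}{z}\R{z}Z\mu$, which is the claimed identity for $\Re(z)>0$. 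One should also note the bound $\norm{\R{z}Z\mu}_{\Bs}\leq \Cone\Re(z)^{-1}\norm{Z\mu}_{\Bs}\leq \Cone\Re(z)^{-1}\norm{\mu}_Z$, so both sides are bona fide bounded operators from $(\operatorname{Dom}(Z),\norm{\cdot}_Z)$ to $\Bs$.

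Finally, to pass from $\Re(z)>0$ to all $z\neq 0$ in the holomorphic domain of $\R{z}$ (which, by Lemma~\ref{lem:extension}, includes $\{\Re(z)>-\lambda,\abs{\Im(z)}\geq\beta\}$ together with the complement of the finitely many poles in $\{\Re(z)>-\lambda\}$), I would invoke analytic continuation: fix any $\mu\in\operatorname{Dom}(Z)$, then both $z\mapsto \R{z}\mu - \frac{1}{z}\mu$ and $z\mapsto \frac{1}{z}\R{z}Z\mu$ are $\Bs$-valued holomorphic functions on the connected open set $\{z\neq 0\}\cap(\text{holomorphic domain of }\R{z})$ (holomorphy of $z\mapsto\R{z}$ there is exactly the content of Lemma~\ref{lem:extension}, and $\frac1z$ is holomorphic away from $0$), they agree on the open set $\{\Re(z)>0\}$, hence they agree throughout the connected domain. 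Since this holds for every $\mu\in\operatorname{Dom}(Z)$ and both sides are elements of $\cB(\operatorname{Dom}(Z),\Bs)$, the operator identity follows.

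The main obstacle, and the only point needing real care, is justifying the differentiation formula $\frac{d}{dt}\T{t}\mu = \T{t}Z\mu$ and the interchange of the (strong) integral with the limit defining $Z$ in the absence of strong continuity of the semigroup. For $\mu\in\operatorname{Dom}(Z)$ this can be handled by writing $\frac{1}{h}(\T{t+h}\mu - \T{t}\mu) = \T{t}\bigl(\frac{1}{h}(\T{h}\mu-\mu)\bigr)$, using continuity of the fixed bounded operator $\T{t}$ to push the limit inside, giving the right derivative $\T{t}Z\mu$; the left derivative is obtained similarly after noting $\frac{1}{h}(\T{t}\mu-\T{t-h}\mu) = \T{t-h}\bigl(\frac1h(\T{h}\mu-\mu)\bigr)$ and that $\T{t-h}$ stays uniformly bounded. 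Alternatively one can avoid pointwise differentiation entirely and argue directly from $\T{t}\mu - \mu = \int_0^t \T{s}Z\mu\ ds$ (which itself follows for $\mu\in\operatorname{Dom}(Z)$), multiply by $e^{-zt}$, integrate over $t\in(0,\infty)$, and apply Fubini to the resulting iterated Bochner integral; this is robust and only uses the uniform bound $\norm{\T{t}}_{\Bs}\leq\Cone$ for absolute convergence.
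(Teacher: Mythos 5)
Your proof is correct, but it takes a more self-contained route than the paper. The paper's proof is a one-liner: it cites the standard semigroup fact (\cite[Theorem 8.2.1]{Davies:2007qy}) that $\R{z}=(z\Id-Z)^{-1}$, and then simply expands $\R{z}(z\Id - Z)=\Id$ on $\operatorname{Dom}(Z)$ to get $z\R{z}-\R{z}Z=\Id$. You instead re-derive that resolvent identity from the Bochner-integral definition \eqref{eq:defRz} via integration by parts (or, more robustly, via $\T{t}\mu-\mu=\int_0^t\T{s}Z\mu\,ds$ and Fubini), and then extend to the full holomorphic domain by analytic continuation. Two comments on the comparison. First, your approach is arguably more careful on a point the paper elides: the cited Davies theorem is stated for strongly continuous semigroups, whereas here only the weak-Lipschitz Assumption~\ref{ass:Lip} is available; your direct derivation only uses the uniform bound $\norm{\T{t}}_{\Bs}\le\Cone$ and the definition of $Z$, and your observation that $t\mapsto\T{t}\mu$ is $\Bs$-continuous for $\mu\in\operatorname{Dom}(Z)$ (since $\T{t\pm h}\mu-\T{t}\mu$ reduces to $\T{\cdot}(\T{h}\mu-\mu)$) is exactly what makes the averaged/Fubini version of the argument go through. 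Second, your explicit analytic-continuation step is genuinely needed to get the identity on the \emph{extended} holomorphic domain from Lemma~\ref{lem:extension} (the integral representation only converges for $\Re(z)>0$); the paper's proof implicitly absorbs this into the identification $\R{z}=(z\Id-Z)^{-1}$, which on the extension again rests on a continuation argument. So you lose brevity but gain an argument that works verbatim under the paper's weaker hypotheses and makes the extension step explicit.
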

\begin{proof}
The claimed result concerns only $\operatorname{Dom}(Z)$ and so it suffices to consider $\T{t} : \Ds \to \Ds$, which, as discussed in the paragraph proceeding Theorem~\ref{thm:main}, is a strong-continuous one-parameter semigroup (see also Remark~\ref{rem:zd}).  
Consequently, by standard~\cite[Theorem 8.2.1]{Davies:2007qy} semigroup theory $\R{z} = (z\Id - Z)^{-1}$. This means that $\R{z} (z\Id - Z) = \Id = z\R{z} - \R{z}Z$.
\end{proof}

\begin{proof}[Proof of the second part of Theorem~\ref{thm:main}]
Let $\Re(a)>0$ and let $\ell <\lambda$ such that  $\Re(z_{j})>-\ell$ for all $j$.
By Lemma~\ref{lem:inverse} and shifting the contour of integration, remembering that $\R{z}$ has a pole at each $\{z_{j}\}_{j=1}^{N}$, we have, on $\cB(\Bs,\As)$, for all $t\geq0$
\[
\begin{split}
  \T{t}   &= \lim_{k\to \infty} \frac{1}{2\pi i} \int_{-k}^{k} e^{(a+ib)t}\R{a+ib} \ db\\
  &= \lim_{k\to \infty} \frac{1}{2\pi i} \int_{-k}^{k} e^{(-\ell+ib)t} \R{-\ell+ib}\ db
  + \sum_{j=1}^{N}  \frac{1}{2\pi i} \int_{\Gamma_{j}} e^{zt} \R{z} \ dz.
  \end{split}
\]
Since $R(z)$ is a pseudo-resolvent we represent $R(z)$ by the Laurent expansion in terms of projectors and nilpotents (see \cite[III-(6.35)]{Kato1966} and note that the quasi-nilpotents are actually nilpotents since they are finite rank in this case).
Consequently
 \[
 \begin{aligned}
     \frac{1}{2\pi i} \int_{\Gamma_{j}} e^{zt} R(z) \ dz
 &=    \frac{1}{2\pi i} \int_{\Gamma_{j}} e^{zt} \Big[  \frac{\Pi_{j}}{z-z_{j}} + \sum_{n=1}^{\infty} \frac{\cN_{j}^{n}}{(z-z_{j})^{n+1}}  \Big] \ dz\\
 &= e^{z_j t} \Big(  \Pi_{j} + \sum_{n=1}^{\infty} \frac{t^n}{n!} \cN_{j}^{n}  \Big)
 = e^{tz_{j}}e^{t \cN_{j}} \Pi_{j}.
 \end{aligned}
 \]
This means that, for all $t\geq0$,
\begin{equation}\label{eq:Pt}
\cP_{t} =  \lim_{k\to \infty} \frac{e^{-\ell t}}{2\pi i} \int_{-k}^{k} e^{ib t} \R{-\ell+ib}\ db.
\end{equation}
Since
$\int_{-\infty}^{\infty} \frac{e^{(\ell+ib)t} }{\ell+ib} \ db =0$
 we have 
\[
  \cP_{{t}}   = \lim_{k\to \infty} \frac{e^{-\ell t}}{2\pi i} \int_{-k}^{k} e^{ib t}\left(  \R{-\ell+ib} - \frac{\Id}{-\ell+ib}\right) \ db. 
\]
By Lemma~\ref{lem:Z} we have that, on $\As$, for every $\mu\in\operatorname{Dom}(Z) $ 
\[
 \cP_{{t}}\mu   = \lim_{k\to \infty} \frac{e^{-\ell t}  }{2\pi i} \int_{-k}^{k} e^{ib t}  \frac{\R{-\ell+ib}Z\mu}{-\ell+ib}  \ db.
\]
We must estimate $\norm{  \cP_{{t}}\mu }_{\As}$.
Note that $\norm{\smash{  \R{-\ell+ib}Z\mu}   }_{ \As}   \leq \norm{ \smash{ \R{-\ell+ib}}}_{\Bs\to \As}\norm{Z\mu}_{\Bs}$.
Let
\[
\Cmid :=   \frac{1}{2\pi } \int_{-\beta}^{\beta}   \frac{\norm{\smash{\R{-\ell+ib}}}_{\Bs \to \As}}{\abs{\smash{-\ell+ib}}}  \ db.
\]
Since the contour $\{z\in \bC, \Re(z) = -\ell, \abs{ \Im{z}} \leq \beta\}$ was chosen to avoid all the singularities of $\R{z}$ we have that $\Cmid <\infty$.
By Lemma~\ref{lem:cancel} and Lemma~\ref{lem:extension2}
$\norm{\smash{\R{-\ell+ib}}}_{\Bs \to \As}\leq  \Cfour \Cjim \abs{b}^{-(1-\Cjohn)}$ for all $\abs{b}\geq \beta$. Since $(1-\Cjohn) \in (0,1)$
\[
\Couter :=  \frac{1}{2\pi } \int_{\beta}^{\infty} \abs{b}^{-(2-\Cjohn)}   \ db <\infty.
\]
We have  shown that
$
\norm{  \cP_{{t}}\mu }_{\As} \leq  \Cjune e^{-\ell t}  \norm{Z\mu}_{\Bs}
$
where $\Cjune :=  \left( \Cmid + 2 \Couter \Cfour \Cjim   \right)$.
\end{proof}

\section{The Rapid Mixing Case}\label{sec:proofs2}

Throughout we suppose that Assumptions~\ref{ass:Lip}, \ref{ass:LY}, and \ref{ass:rapid} are satisfied. 
The argument follows closely~\cite{dolgopyat1998prm} but instead of using a Taylor expansion we take advantage of the generator $Z$.

\begin{proof}[Proof of the first part of Theorem~\ref{thm:main2}]
As before we use Lemma~\ref{lem:inverse} to write that
\begin{equation}\label{eq:last}
  \T{t}   = \lim_{k\to \infty} \frac{1}{2\pi i} \int_{-k}^{k} e^{(a+ib)t}\R{a+ib} \ db.
\end{equation}
Identically to the proof of the first part of Theorem~\ref{thm:main} we deal with the part of the integral from $-\beta$ to $\beta$ by selecting a finite set of projectors $\{ \Pi_{{j}} \}_{j}$ corresponding to the poles $\{ z_{j}\}_{j}$ of $\R{z}$ in the region $\{z \in \bC: \Re(z) > -\ell, \abs{\Im(z)} \leq \beta\}$.
We define (as before)
 \[
 \cP_{t} := \T{t} - \sum_{j=1}^{N} e^{tz_{j}} \Pi_{{j}}.
 \]
It now remains to estimate $ \norm{  \cP_{{t}}\mu }_{\As} $ in terms of $\norm{ \mu }_{Z^{q}}$ (for some $q \in \bN$) crucially using Assumption~\ref{ass:rapid}. 
It is convenient to shift the contour of integration to $\{ i b - \min(\epsilon, \abs{b}^{-\Ctwelve}), b \in \bR\}$ where $\epsilon \in (0,\ell)$ is chosen such that the new contour avoids all the singularities of $\R{z}$.
The central part of this integral (\eqref{eq:last} after the shift of the contour) gives an exponentially bounded term as per~\eqref{eq:Pt} with a constant which depends on $\sup_{\abs{b}\leq \beta}\norm{\R{-\epsilon, b}}_{\Bs} <\infty$.
This means that we merely need to estimate the norm of
\begin{equation}\label{eq:remains}
\lim_{k\to \infty} \int_{\beta}^{k} \exp\left( {- t \abs{b}^{-\Ctwelve}} \right) e^{ibt} \R{ i b - \abs{b}^{-\Ctwelve}   } \ db,
\end{equation}
and the similar integral from $-k$ to $-\beta$.
This will be postponed until the end of this section.
\end{proof}

Now we will need the following higher order version of Lemma~\ref{lem:Z}.
\begin{lem}\label{lem:ZZ}
Let $n\in \bN$. For all  $z \in \bC$ in the holomorphic domain of $\R{z}$ and $z\neq 0$, on $\cB(\operatorname{Dom}(Z^{n}), \Bs)$
\[
\R{z} = \frac{1}{z^{n}} \R{z}Z^{n}  +  \sum_{j=0}^{n-1} \frac{1}{z^{j+1}} Z^{j}.
\]
\end{lem}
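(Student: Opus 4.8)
The plan is to prove Lemma~\ref{lem:ZZ} by induction on $n$, using Lemma~\ref{lem:Z} as the base case and as the engine of the inductive step. For $n=1$ the claimed identity reads $\R{z} = \frac{1}{z}\R{z}Z + \frac{1}{z}\Id$, which is exactly the content of Lemma~\ref{lem:Z} rearranged. So the base case is immediate.

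For the inductive step, suppose the identity holds for some $n\geq 1$ on $\cB(\operatorname{Dom}(Z^{n}), \Bs)$, and consider $\mu \in \operatorname{Dom}(Z^{n+1})$. Then $Z^{n}\mu \in \operatorname{Dom}(Z)$, so I may apply Lemma~\ref{lem:Z} to the vector $Z^{n}\mu$: this gives $\R{z}Z^{n}\mu = \frac{1}{z}\R{z}Z^{n+1}\mu + \frac{1}{z}Z^{n}\mu$. Substituting this into the $\frac{1}{z^{n}}\R{z}Z^{n}\mu$ term of the inductive hypothesis yields
\[
\R{z}\mu = \frac{1}{z^{n+1}}\R{z}Z^{n+1}\mu + \frac{1}{z^{n+1}}Z^{n}\mu + \sum_{j=0}^{n-1}\frac{1}{z^{j+1}}Z^{j}\mu,
\]
and absorbing the middle term into the sum (it is precisely the $j=n$ term) gives $\R{z}\mu = \frac{1}{z^{n+1}}\R{z}Z^{n+1}\mu + \sum_{j=0}^{n}\frac{1}{z^{j+1}}Z^{j}\mu$, which is the claim for $n+1$. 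One should note here that this manipulation is legitimate on the whole holomorphic domain of $\R{z}$ and not merely for $\Re(z)>0$, since Lemma~\ref{lem:Z} is already stated for all $z\neq 0$ in that domain; the operator identity then propagates by the same induction.

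I do not anticipate a genuine obstacle here — the lemma is a routine algebraic consequence of Lemma~\ref{lem:Z}. The only point requiring a little care is the bookkeeping of domains: each application of Lemma~\ref{lem:Z} to $Z^{n}\mu$ requires $Z^{n}\mu \in \operatorname{Dom}(Z)$, i.e. $\mu \in \operatorname{Dom}(Z^{n+1})$, which is exactly the hypothesis under which the $(n+1)$-st identity is asserted, so everything is consistent. One could alternatively phrase the proof non-inductively by iterating the substitution $\R{z} = \frac{1}{z}\Id + \frac{1}{z}\R{z}Z$ into itself $n$ times, peeling off one power of $\frac1z$ and one factor of $Z$ at each stage, but the induction is cleaner to write. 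I would present the induction.
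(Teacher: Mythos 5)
Your proof is correct and follows exactly the paper's approach: the paper likewise takes $n=1$ from Lemma~\ref{lem:Z} and obtains the general case by iterating the identity $\R{z} = \frac{1}{z}\R{z}Z + \frac{1}{z}\Id$, which is precisely your induction. You have merely written out the iteration and the domain bookkeeping that the paper leaves implicit.
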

\begin{proof}
The case $n=1$ is Lemma~\ref{lem:Z}. 
I.e., $\R{z} =  \frac{1}{z} \R{z}Z +  \frac{1}{z} \Id$.
Simply iterating this formula proves the result for all $n\in \bN$.
\end{proof}

For the following it is essential that Assumption~\ref{ass:rapid} is satisfied.
\begin{lem}
Let $n\in \bN$.
There exists $\Cthirteen>0$ such that
\[
\norm{\smash{  \R{  ib -\abs{b}^{-\Ctwelve}  } \mu }  }_{\Bs}
\leq
\Cthirteen \abs{b}^{\Celeven - n} \norm{ \mu }_{Z^{n}}
\]
for all $\mu \in \operatorname{Dom}(Z^{n})$, $b\in \bR$, $\abs{b} \geq \beta$.
\end{lem}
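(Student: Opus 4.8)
The plan is to read the claimed bound straight off the resolvent identity of Lemma~\ref{lem:ZZ}, feeding in the growth bound of Assumption~\ref{ass:rapid}; once the pieces are lined up the proof is only a couple of lines. The first thing I would do is check that the point $z := ib - \abs{b}^{-\Ctwelve}$ genuinely lies in the region where Assumption~\ref{ass:rapid} is available: for $\abs{b}\geq\beta$ one has $\abs{\Im(z)} = \abs{b}\geq\beta$ and $\Re(z) = -\abs{b}^{-\Ctwelve} = -\abs{\Im(z)}^{-\Ctwelve}$, so $z$ sits exactly on the boundary curve of that region, $R(z)\in\cB(\Bs,\Bs)$ is defined and holomorphic near $z$, and $\norm{\R{z}}_{\Bs}\leq\Cten\abs{b}^{\Celeven}$. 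The only other elementary fact needed about $z$ is that $\abs{z}^{2} = b^{2} + \abs{b}^{-2\Ctwelve}\geq b^{2}$, hence $\abs{z}\geq\abs{b}\geq\beta$ and so $\abs{z}^{-k}\leq\abs{b}^{-k}$ for every $k\geq 0$.

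Next I would apply Lemma~\ref{lem:ZZ} at this $z$ (it is nonzero and in the holomorphic domain of $R$) to $\mu\in\operatorname{Dom}(Z^{n})$, obtaining
\[
\R{z}\mu = \frac{1}{z^{n}}\R{z}Z^{n}\mu + \sum_{j=0}^{n-1}\frac{1}{z^{j+1}}Z^{j}\mu.
\]
For the first summand, combining $\abs{z}^{-n}\leq\abs{b}^{-n}$ with $\norm{\R{z}}_{\Bs}\leq\Cten\abs{b}^{\Celeven}$ gives $\norm{z^{-n}\R{z}Z^{n}\mu}_{\Bs}\leq\Cten\abs{b}^{\Celeven-n}\norm{Z^{n}\mu}_{\Bs}$, which is already of the desired form. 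For each of the remaining summands one has only the trivial bound $\norm{z^{-(j+1)}Z^{j}\mu}_{\Bs}\leq\abs{b}^{-(j+1)}\norm{Z^{j}\mu}_{\Bs}$. Since $\norm{Z^{j}\mu}_{\Bs}\leq\norm{\mu}_{Z^{n}}$ for every $0\leq j\leq n$, summing up yields $\norm{\R{z}\mu}_{\Bs}\leq\bigl(\Cten\abs{b}^{\Celeven-n} + \sum_{j=0}^{n-1}\abs{b}^{-(j+1)}\bigr)\norm{\mu}_{Z^{n}}$, and on the range $\abs{b}\geq\beta$ one absorbs the lower-order powers to arrive at $\norm{\R{z}\mu}_{\Bs}\leq\Cthirteen\abs{b}^{\Celeven-n}\norm{\mu}_{Z^{n}}$ with $\Cthirteen$ depending only on $\Cten$, $\beta$, $\Celeven$ and $n$.

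The argument is essentially mechanical and there is no deep obstacle, only bookkeeping. The two points worth watching are, first, that the shifted contour $z = ib-\abs{b}^{-\Ctwelve}$ lands precisely on the edge of the region furnished by Assumption~\ref{ass:rapid}, so that the bound $\norm{\R{z}}_{\Bs}\leq\Cten\abs{b}^{\Celeven}$ is legitimately available there; and second, that each application of the resolvent identity in Lemma~\ref{lem:ZZ} costs one extra factor of $\abs{z}^{-1}\leq\abs{b}^{-1}$, which is exactly what turns the possibly large growth exponent $\Celeven$ into the decaying exponent $\Celeven-n$. This last mechanism is what will ultimately force the regularity index $q = q(p)$ to grow with the desired decay rate $p$ in the concluding estimate of the section, where this lemma is integrated against $\exp(-t\abs{b}^{-\Ctwelve})$ over $\abs{b}\geq\beta$.
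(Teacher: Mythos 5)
Your proof is correct and follows essentially the same route as the paper: apply Lemma~\ref{lem:ZZ} at $z = ib - \abs{b}^{-\Ctwelve}$, invoke Assumption~\ref{ass:rapid} for $\norm{\R{z}}_{\Bs}\leq \Cten\abs{b}^{\Celeven}$, and use $\abs{z}\geq\abs{b}$ to control the powers of $z$; you are in fact more explicit than the paper about verifying that $z$ lies (on the boundary of) the admissible region. The one caveat --- shared with the paper's own proof, which simply asserts the existence of $\Cthirteen$ at this point --- is that absorbing the lowest-order term $\abs{b}^{-1}\norm{\mu}_{\Bs}$ into $\Cthirteen\abs{b}^{\Celeven-n}$ on the range $\abs{b}\geq\beta$ is only legitimate when $\Celeven-n\geq -1$, so for large $n$ the stated inequality requires a separate treatment of the $\sum_{j=0}^{n-1}z^{-(j+1)}Z^{j}$ terms rather than a crude absorption.
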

\begin{proof}
Using Lemma~\ref{lem:ZZ} we have
\[
\norm{\R{z}\mu } 
\leq
\frac{1}{\abs{z}^{n}} \norm{\R{z}}_{\Bs} \norm{ \mu }_{Z^{n}}  
+  \sum_{j=0}^{n-1} \frac{1}{\abs{z}^{j+1}} \norm{ \mu }_{Z^{j}}.
\]
We now substitute $z =  ib -\abs{b}^{-\Ctwelve}  $. 
Since $\norm{\R{z}}_{\Bs} \leq \Cten \abs{\Im(z)}^{\Celeven}    $ by Assumption~\ref{ass:rapid} there exists some $\Cthirteen > 0$ such that the lemma holds. 
\end{proof}

\begin{proof}[Proof of the second part of Theorem~\ref{thm:main2}]
We now use the above lemma to estimate the norm of the integral of~\eqref{eq:remains} and so complete the proof of  Theorem~\ref{thm:main2}.
\begin{multline*}
\lim_{k\to \infty}
 { \Big\|  \int_{\beta}^{k} \exp\left( {- t \abs{b}^{-\Ctwelve}} \right)  e^{ibt} \R{ i b - \abs{b}^{-\Ctwelve}   }\mu \ db \Big\|}_{\Bs}\\
 \leq
 \Cthirteen \left( 
 \int_{\beta}^{\infty} \exp\left( {- t \abs{b}^{-\Ctwelve}} \right)    \abs{b}^{\Celeven - q}   \ db
 \right)
 \norm{ \mu }_{Z^{q}}. 
\end{multline*}
This holds for any $q\in \bN$ but for our purposes we must choose $q$ large, in particular larger than $\Celeven$. 
Estimating the integral\footnote{Suppose that $a>0$, $n,k\in \bN$. 
Let $I(n) := \int_{0}^{a} e^{-tx}x^{n} \ dx$. 
Integrating by parts $I(n) \leq \frac{n}{t}I(n-1)$, and $I(0) \leq \frac{1}{t}$.
Consequently $I(n) \leq n!\ t^{-n}$. 
By a change of variables $x = y^{-k}$ the integral $I(n)$ is equal to $k \int_{a^{k}}^{\infty} \exp({-t y^{-k}}) y^{-(nk +k +1)}\ dy$.}
and choosing $q$ even larger depending also on the required rate of polynomial decay (denoted $p$ in the statement of the theorem) concludes the estimate.
\end{proof}



\end{document}